\renewcommand\eqref[1]{(\ref{#1})} 
\numberwithin{equation}{section}
\theoremstyle{plain}
\newtheorem{thm}{Theorem}[section]
\theoremstyle{definition}
\newtheorem{rem}[thm]{Remark}
\begin{document}

   \title[Boundary conditions for the Kohn Laplacian]
   {On Kac's principle of not feeling the boundary for the Kohn Laplacian on the Heisenberg group}

\author[Michael Ruzhansky]{Michael Ruzhansky}
\address{
  Michael Ruzhansky:
  \endgraf
  Department of Mathematics
  \endgraf
  Imperial College London
  \endgraf
  180 Queen's Gate, London SW7 2AZ
  \endgraf
  United Kingdom
  \endgraf
  {\it E-mail address} {\rm m.ruzhansky@imperial.ac.uk}
  }
\author[Durvudkhan Suragan]{Durvudkhan Suragan}
\address{
  Durvudkhan Suragan:
  \endgraf
  Institute of Mathematics and Mathematical Modelling
  \endgraf
  125 Pushkin str.
  \endgraf
  050010 Almaty
  \endgraf
  Kazakhstan
  \endgraf
  {\it E-mail address} {\rm suragan@math.kz}
  }

\thanks{The authors were supported in parts by the EPSRC
 grant EP/K039407/1 and by the Leverhulme Grant RPG-2014-02,
 as well as by the MESRK grant 5127/GF4.}

     \keywords{sub-Laplacian, Kohn Laplacian, integral boundary conditions, Heisenberg group, Newton potential}
     \subjclass{35R03 35S15}

     \begin{abstract}
     In this note we construct an integral boundary condition for the Kohn Laplacian in a given domain on the Heisenberg
     group extending to the setting of the Heisenberg group M.~Kac's ``principle of not feeling the boundary''.
     This also amounts to finding the trace on smooth
     surfaces of the Newton potential associated to the Kohn Laplacian.
     We also obtain similar results for higher powers of the Kohn Laplacian.
     \end{abstract}
     \maketitle

\section{Introduction}

In a bounded domain of the Euclidean space $\Omega\subset\mathbb R^{d},\,\, d\geq 2,$ it is very well known that the
solution to the Laplacian equation
\begin{equation}
\Delta u(x)=f(x), \,\,\,\ x\in\Omega,\label{15}
\end{equation}
is given by the Green formula (or the Newton potential formula)
\begin{equation}
u(x)=\int_{\Omega}\varepsilon_{d}(x-y)f(y)dy,\,\, x\in\Omega,
\label{14}
\end{equation}
for suitable functions $f$ supported in $\Omega$.
Here $\varepsilon_{d}$ is the fundamental solution to $\Delta$ in $\mathbb R^d$ given by
\begin{equation}
\varepsilon_{d}(x-y)=\left\{
\begin{array}{ll}
    \frac{1}{(2-d)s_{d}}\frac{1}{|x-y|^{d-2}},\,\,d\geq 3,\\
    \frac{1}{2\pi}\log|x-y|, \,\,d=2, \\
\end{array}
\right.
\label{EQ:fs}
\end{equation}
where $s_{d}=\frac{2\pi^{\frac{d}{2}}}{\Gamma(\frac{d}{2})}$ is the surface area of the unit sphere in
$\mathbb R^{d}$.

An interesting  question having several important applications is what boundary conditions can be put on $u$ on
the (smooth) boundary $\partial\Omega$ so that equation \eqref{15} complemented by this
boundary condition would have the solution in $\Omega$ still given by the same formula \eqref{14},
with the same kernel $\varepsilon_d$ given by \eqref{EQ:fs}.
It turns out that the answer to this question is the integral boundary condition
\begin{equation}
-\frac{1}{2}u(x)+\int_{\partial\Omega}\frac{\partial\varepsilon_{d}(x-y)}{\partial n_{y}}u(y)d S_{y}-
\int_{\partial\Omega}\varepsilon_{d}(x-y)\frac{\partial u(y)}{\partial n_{y}}d S_{y}=0,\,\,
x\in\partial\Omega,
\label{16}
\end{equation}
where $\frac{\partial}{\partial n_{y}}$ denotes the outer normal
derivative at a point $y$ on $\partial\Omega$.
A converse question to the one above would be to determine the trace of the Newton potential
\eqref{14} on the boundary surface $\partial\Omega$, and one can use the potential theory to show that it
has to be given by \eqref{16}.

The boundary condition \eqref{16} appeared in M. Kac's work \cite{Kac1} where
he called it and the subsequent spectral analysis
``the principle of not feeling the boundary''. This was further expanded in Kac's book \cite{Kac2}
with several further applications to the spectral theory and the asymptotics of the Weyl's eigenvalue
counting function.
In \cite{KS1} by using the boundary condition \eqref{16} the eigenvalues and eigenfunctions of the Newton potential
\eqref{14} were explicitly calculated in the 2-disk and in the 3-ball.
In general, the boundary value problem \eqref{15}-\eqref{16} has various interesting properties and applications
(see, for example, Kac \cite{Kac1,Kac2} and Saito \cite{Sa}).
The boundary value problem \eqref{15}-\eqref{16} can also be generalised for higher degrees of the
Laplacian, see \cite{KS2, KS3}.

\medskip
In this note we are interested in and we give answers to the following questions:

\begin{itemize}
\item What happens if an elliptic operator (the Laplacian) in \eqref{15} is replaced by a hypoelliptic operator?
We will realise this as a model of replacing the Euclidean space by the Heisenberg group and the Laplacian on
$\mathbb R^d$ by a sub-Laplacian (or the Kohn-Laplacian) on $\mathbb H_{n-1}$. We will show that the
boundary condition \eqref{16} is replaced by the integral boundary condition \eqref{7} in this setting
(see also \eqref{EQ:bc0}).

\item Since the theory of boundary value problems for elliptic operators is well understood, we know that the single
condition \eqref{16} on the boundary $\partial\Omega$ of a bounded domain $\Omega$ guarantees the unique
solvability of the equation \eqref{15} in $\Omega$. Is this uniqueness preserved in the hypoelliptic model as well
for a suitably chosen replacement of the boundary condition \eqref{16}? The case of the second order operators
is favourable from this point of view due to the validity of the maximum principle, see Bony \cite{bony_69}.
The Dirichlet problem has been considered by Jerison \cite{J}.
The answer in the case of the boundary value problem in our setting is given in Theorem \ref{THM:main}.

\item What happens if we consider the above questions for higher order equations? In general, it is known
that for higher order Rockland operators on stratified groups, fundamental solutions may be not unique,
see Folland \cite{folland_75} and Geller \cite{geller_83}, and for a unifying discussion see also the book
\cite{FR}. However, for powers of the Kohn Laplacian we still have the uniqueness provided that we impose
higher order boundary conditions in a suitable way, see Theorem \ref{THM:main2}.
\end{itemize}
We now describe the setting of this paper.
The Heisenberg group $\mathbb{H}_{n-1}$ is the space $\mathbb C^{n-1}\times \mathbb R$ with the group operation
given by
 \begin{equation}
 (\zeta,t)\circ (\eta,\tau)=(\zeta+\eta, t+\tau+ 2\,{\rm Im}\,\zeta\eta) \label{1},
 \end{equation}
for $ (\zeta,t), (\eta,\tau) \in \mathbb C^{n-1}\times \mathbb R$.
Writing $\zeta=x+iy$ with $x_{j}, y_{j}, j=1,...,n-1,$ the real coordinates on $\mathbb{H}_{n-1}$,
the left-invariant vector fields
$$\tilde X_{j}=\frac{\partial}{\partial x_{j}}+2y_{j}\frac{\partial}{\partial t}, \quad j=1,..., n-1,$$
$$\tilde Y_{j}=\frac{\partial}{\partial y_{j}}-2x_{j}\frac{\partial}{\partial t}, \quad j=1,..., n-1,$$
$$T=\frac{\partial}{\partial t},$$
form a basis for the Lie algebra $\mathfrak{h}_{n-1}$ of $\mathbb{H}_{n-1}$.

On the other hand, $\mathbb{H}_{n-1}$ can be viewed as the boundary of the Siegel upper half space
in $\mathbb C^{n}$,
$$
\mathbb{H}_{n-1}=\{(\zeta,z_{n})\in \mathbb C^{n}: {\rm Im}\, z_{n}=|\zeta|^{2}, \zeta=(z_{1},..., z_{n-1})\}.
$$
Parameterizing $\mathbb{H}_{n-1}$ by $z=(\zeta,t)$ where $t={\rm Re}\, z_{n}$, a basis for the complex tangent space of
$\mathbb{H}_{n-1}$ at the point $z$ is given by the left-invariant vector fields
$$
X_{j}=\frac{\partial}{\partial z_{j}}+i\overline{z}\frac{\partial}{\partial t},\quad j=1,\ldots,n-1.
\label{2}
$$
We denote their conjugates by
$X_{\overline{j}}\equiv \overline{X}_j=\frac{\partial}{\partial \overline{z}_{j}}-iz\frac{\partial}{\partial t}.$
The operator
\begin{equation} \Box_{a,b}=\sum_{j=1}^{n-1} (aX_{j}X_{\overline{j}}+bX_{\overline{j}}X_{j}), \quad a+b=n-1,\end{equation}
is a left-invariant, rotation invariant differential operator that is homogeneous
of degree two
(cf. \cite{FK}).
This operator is a slight generalisation of the standard
sub-Laplacian or Kohn-Laplacian $\Box_b$ on the Heisenberg group $\mathbb{H}_{n-1}$ which,
when acting on the coefficients of a $(0,q)$-form can be written as
$$
\Box_b=-\frac{1}{n-1}\sum_{j=1}^{n-1} ((n-1-q) X_j X_{\overline{j}}+q X_{\overline{j}} X_j).
$$

Folland and Stein \cite{FS} found that a fundamental solution of the operator $\Box_{a,b}$ is
a constant multiple of
\begin{equation}
\varepsilon(z)= \varepsilon(\zeta,t)=\frac{1}{(t+i|\zeta|^{2})^{a}(t-i|\zeta|^{2})^{b}},
\label{3}
\end{equation}
and defined the Newton potential (volume potential) for a function $f$ with compact support
contained in a set $\Omega\subset \mathbb{H}_{n-1}$ by
\begin{equation}
u(z)=\int_{\Omega}f(\xi)\varepsilon(\xi^{-1}z)d\nu(\xi),
\label{4}
\end{equation}
with $d\nu$ being the volume element (the Haar measure on $\mathbb{H}_{n-1}$), coinciding with the Lebesgue measure on
$\mathbb C^{n-1}\times \mathbb R$.
More precisely, they proved that
$$
\Box_{a,b} u= c_{a,b}f,
$$
where the constant $c_{a,b}$ is zero if $a$ and $b= -1,-2,\ldots,n,n+1,\ldots,$
and $c_{a,b}\not=0$ if $a$ or $b\not= -1,-2,\ldots,n,n+1,\ldots$
In fact, then we can take
$$c_{a,b}=\frac{2(a^{2}+b^{2}){\rm Vol}(B_{1})}{(2i)^{n}}
   \frac{(n-1)!}{a(a-1)...(a-n
    +1)}(1-{\rm exp}(-2ia\pi))$$
 for $a\not\in\mathbb Z$, see the proof of Theorem 1.6 in Romero \cite{R}.
Similar conclusions by a different methods were obtained by Greiner and Stein \cite{GS}.
For a more general analysis of fundamental solutions for sub-Laplacians we can refer to
Folland \cite{folland_75} as well as to a discussion and references in
Stein \cite{St}. The Kohn Laplacian and its generalisations may be considered as natural
models for dealing with sums of squares also on more general manifolds, as it is now well known,
see e.g. Rothschild and Stein \cite{RS}.

In the above notation, the distribution $\frac{1}{c_{a,b}}\varepsilon$ is the fundamental solution of
$\Box_{a,b}$, while $\varepsilon$ satisfies the equation
\begin{equation}\label{EQ:vare}
\Box_{a,b}\varepsilon=c_{a,b}\delta.
\end{equation}
However, although we could have rescaled $\varepsilon$ for it to become the fundamental
solution, we prefer to keep the notation yielding \eqref{EQ:vare} in order to follow the
notation of \cite{FS} and \cite{R} to be able to refer to their results directly.

\medskip
Throughout this paper we assume that $c_{a,b}\neq 0$, i.e. both
$$a \textrm{ and } b\neq -1,-2,\ldots,n,n+1,\ldots.$$
In addition, without loss of generality we may also assume that $a,b\geq0$.

\medskip
Now, in analogy to the elliptic boundary value problem \eqref{15}--\eqref{16} for the Laplacian
$\Delta$ in $\mathbb R^d$, we consider the hypoelliptic boundary value problem for
the sub-Laplacian $\Box_{a,b}$ on $\mathbb{H}_{n-1}$, namely the equation
\begin{equation}
\Box_{a,b} u= c_{a,b}f
\label{EQ:BV}
\end{equation}
in a bounded set $\Omega\subset \mathbb{H}_{n-1}$ with smooth boundary $\partial\Omega$.
The first aim of this paper is to find a boundary condition of the Newton potential $u$ on
$\partial\Omega$ such that with this boundary condition the equation \eqref{EQ:BV} has a unique solution,
which is the Newton potential \eqref{4}.

Basing our arguments on the analysis of Folland and Stein \cite{FS} and Romero \cite{R}
we show that the
boundary condition \eqref{16} for the Laplacian in $\mathbb R^d$
is now replaced by the integral boundary condition \eqref{7} in this setting,
namely by the condition
\begin{equation}\label{EQ:bc0}
(c_{a,b}-H.R(z))u(z) -\int_{\partial \Omega} \varepsilon(\xi,z) \langle \nabla^{b,a}u(\xi), d\nu(\xi)\rangle +p.v. \, Wu(z)=0,
\; z\in\partial\Omega,
\end{equation}
on the boundary $\partial\Omega$, where
$H.R(z)$ is the so-called half residue, and where the second and the third term can be interpreted as
coming from the
suitably defined respectively single and double layer potentials $S$ and $W$ for the problem.
See Section \ref{SEC:subL} for the definitions and the precise statement.

\medskip
In Section \ref{SEC:subL} by using properties of fundamental solutions we construct a well-posed boundary value problem
for the differential equation \eqref{EQ:BV} with the required properties.
In Section \ref{SEC:higher} we generalise this result for higher powers of the Kohn Laplacian.
Throughout this paper we may use notations from \cite{R}, \cite{M} and \cite{RT}.

\section{The Kohn Laplacian}
\label{SEC:subL}

Let $\Omega\subset \mathbb{H}_{n-1}$ be an open bounded domain with a smooth boundary $\partial\Omega \in C^{\infty}$.
Consider the following analogy of the Newton potential on the Heisenberg group
\begin{equation}
u(z)=\int_{\Omega}f(\xi)
\varepsilon(\xi,z)d\nu(\xi)\quad \textrm{ in } \Omega,
\label{6}
\end{equation}
where $\varepsilon(\xi,z)=\varepsilon(\xi^{-1}z)$ is the rescaled
fundamental solution \eqref{3} of the sub-Laplacian, satisfying \eqref{EQ:vare}.
 As we mentioned $u$ is a solution of \eqref{EQ:BV} in $\Omega$. The aim of this section is to find a boundary condition
 for $u$ such that with this boundary condition the equation \eqref{EQ:BV} has a unique solution in $C^{2}(\Omega)$, say, and this solution is the Newton potential \eqref{6}.

\medskip
We recall a few notions and properties first. For $z=(\zeta,t)\in  \mathbb{H}_{n-1}$, we define its norm
by $|z|:=(|\zeta|^4+|t|^2)^{1/4}$. As any (quasi-)norm on $\mathbb{H}_{n-1}$, this satisfies a triangle
inequality with a constant, and allows for a polar decomposition. For $0<\alpha<1$, Folland and Stein
\cite{FS} defined the anisotropic H\"older spaces $\Gamma_\alpha(\Omega)$
by
$$
\Gamma_\alpha(\Omega)=\left\{
f:\Omega\to\mathbb C:\; \sup_{\stackrel{z_1,z_2\in \Omega}{z_1\not= z_2}}
\frac{|f(z_2)-f(z_1)|}{|z_2^{-1} z_1|^\alpha}<\infty
\right\}.
$$
For $k\in\mathbb N$ and $0<\alpha<1$, one defines
$\Gamma_{k+\alpha}(\Omega)$ as the space of all $f:\Omega\to\mathbb C$ such that all
complex derivatives of $f$ of order $k$ belong to $\Gamma_\alpha(\Omega)$.

A starting point for us will be that if $f\in \Gamma_\alpha(\Omega)$ for  $\alpha>0$ then
$u$ defined by \eqref{6} is twice differentiable in the complex directions and satisfies the
equation $\Box_{a,b} u= c_{a,b}f$. We refer to Folland and Stein \cite{FS},
Greiner and Stein \cite{GS}, and to Romero \cite{R} for three different approaches to
this property. Moreover, Folland and Stein have shown that if $f\in \Gamma_\alpha(\Omega, loc)$
and $\Box_{a,b} u= c_{a,b}f$, then $f\in \Gamma_{\alpha+2}(\Omega, loc)$.
These results extend those known for the Laplacian, in suitably redefined
anisotropic H\"older spaces.

We record relevant single and double layer potentials for the problem \eqref{EQ:BV}.
In \cite{J}, Jerison used the single layer potential defined by
$$
S_0 g(z)=\int_{\partial\Omega} g(\xi) \varepsilon(\xi,z) dS(\xi),
$$
which, however, is not integrable over characteristic points. On the contrary, the functional
$$
S g(z)=\int_{\partial\Omega} g(\xi) \varepsilon(\xi,z) \langle X_j, d\nu(\xi)\rangle,
$$
where $\langle X, d\nu\rangle$ is the canonical pairing between vector fields
and differential forms, is integrable over the whole boundary $\partial\Omega$. Moreover, it was shown in
\cite[Theorem 2.3]{R} that if the density of
$g(\xi)\langle X_j, d\nu\rangle$
in the operator $S$ is bounded then
$Sg\in\Gamma_\alpha(\mathbb{H}_{n-1})$ for all $\alpha<1$.
Parallel to $S$, it is natural to
use the operator
\begin{equation}\label{EQ:dp}
Wu(z)=\int_{\partial \Omega} u(\xi)\langle \nabla^{a,b}\varepsilon(\xi,z),d\nu(\xi)\rangle
\end{equation}
as a double layer potential.
Our main result for the sub-Laplacian is the following justification of formula
\eqref{EQ:bc0} in the introduction:

\begin{thm} \label{THM:main}
Let $\varepsilon(\xi,z)=\varepsilon(\xi^{-1}z)$ be the rescaled fundamental solution to
$\Box_{a,b}$, so that
\begin{equation}\label{EQ:def-eps}
\Box_{a,b}\varepsilon=c_{a,b}\delta\quad \textrm{ on } \mathbb{H}_{n-1}.
\end{equation}
For any $f\in \Gamma_\alpha(\Omega)$, the Newton potential \eqref{6} is the unique solution
in $C^{2}(\Omega)\cap C^1(\overline{\Omega})$
of the equation
\begin{equation}
\Box_{a,b} u= c_{a,b}f
\label{EQ:BV1}
\end{equation}
with the boundary condition
\begin{multline}\label{7}
(c_{a,b}-H.R(z))u(z)+\lim_{\delta\to 0}\int_{\partial \Omega \backslash\{|\xi^{-1} z|<\delta\}} u(\xi)\langle \nabla^{a,b}\varepsilon(\xi,z),d\nu(\xi)\rangle-
\\
\int_{\partial \Omega} \varepsilon(\xi,z) \langle \nabla^{b,a}u(\xi), d\nu(\xi)\rangle =0,
\qquad \textrm{ for } z\in\partial\Omega,
\end{multline}
where $H.R(z)$ is the so-called half residue given by the formula
\begin{equation}\label{EQ:HR}
H.R(z)=\lim_{\delta\rightarrow 0}\int_{\partial \Omega\backslash\{|\xi^{-1}z|<\delta\}}
\langle \nabla^{a,b}\varepsilon(\xi,z),d\nu(\xi)\rangle,
\end{equation}
with
$$\nabla^{a,b}g=\sum_{j=1}^{n-1} (aX_{j}gX_{\overline{j}}+b X_{\overline{j}}g X_{j}).$$
\end{thm}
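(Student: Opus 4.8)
The plan is to mirror the classical potential-theoretic derivation of \eqref{16}, replacing Green's second identity by an integration-by-parts formula adapted to $\Box_{a,b}$ and the flux forms $\langle X_j,d\nu\rangle$. First I would record the Green-type identity obtained by integrating $\int_\Omega \varepsilon(\xi,z)\,\Box_{a,b}u(\xi)\,d\nu(\xi)$ by parts twice. Since each $X_j$ and $X_{\overline j}$ is divergence-free with respect to the Haar measure, Stokes' theorem turns each transfer of a derivative onto $\varepsilon$ into a boundary term of the form $\int_{\partial\Omega}(\cdots)\langle X_j,d\nu\rangle$. Carrying both derivatives over, the interior operator landing on $\varepsilon$ is the formal transpose $\Box_{a,b}^{t}=\sum_j(aX_{\overline j}X_j+bX_jX_{\overline j})=\Box_{b,a}$, while the boundary contributions organize precisely into the single-layer flux $\int_{\partial\Omega}\varepsilon\,\langle\nabla^{b,a}u,d\nu\rangle$ and the double-layer flux $\int_{\partial\Omega}u\,\langle\nabla^{a,b}\varepsilon,d\nu\rangle=Wu$. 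The asymmetric roles of $a$ and $b$ in $\nabla^{a,b}$ versus $\nabla^{b,a}$ are exactly this transpose bookkeeping.

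Next I would turn this into an interior representation formula. Using the inversion symmetry $\varepsilon_{a,b}(\xi^{-1})=(-1)^{n-1}\varepsilon_{b,a}(\xi)$ together with $\Box_{a,b}^{t}=\Box_{b,a}$ and left-invariance, one checks that $\Box_{b,a}^{\xi}\varepsilon(\xi,z)=c_{a,b}\delta_z$, so that the volume term carrying $\Box_{b,a}\varepsilon$ reproduces $c_{a,b}u(z)$ for interior $z$. This yields, for every $u\in C^2(\Omega)\cap C^1(\overline\Omega)$ and $z\in\Omega$, the identity $c_{a,b}u(z)=\int_\Omega\varepsilon\,\Box_{a,b}u\,d\nu-\int_{\partial\Omega}\varepsilon\langle\nabla^{b,a}u,d\nu\rangle+Wu(z)$. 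For the Newton potential \eqref{6} the volume term equals $c_{a,b}u(z)$ by definition, so the two boundary contributions cancel for interior $z$; the content of the theorem is the boundary trace of this cancellation.

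The main obstacle, and the step where the Heisenberg geometry departs from the Euclidean case, is the limiting behaviour of the double-layer potential $Wu(z)$ as $z\to z_0\in\partial\Omega$. I would establish the jump relation $\lim_{z\to z_0,\,z\in\Omega}Wu(z)=(c_{a,b}-H.R(z_0))\,u(z_0)+\mathrm{p.v.}\int_{\partial\Omega}u\langle\nabla^{a,b}\varepsilon,d\nu\rangle$, where the half-residue $H.R(z_0)$ is the principal value \eqref{EQ:HR} of the kernel itself; the normalization is fixed by testing with $u\equiv1$, for which the interior double layer equals the constant $c_{a,b}$ by the Gauss-type identity $\int_{\partial\Omega}\langle\nabla^{a,b}\varepsilon,d\nu\rangle=c_{a,b}$ valid for $z\in\Omega$. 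The delicate points are the anisotropic singularity of $\nabla^{a,b}\varepsilon$ and the characteristic points of $\partial\Omega$, where the horizontal flux degenerates; here I would lean on the integrability of the flux forms and on Romero's result that the single layer extends to an element of $\Gamma_\alpha$ up to the boundary, so that $\int_{\partial\Omega}\varepsilon\langle\nabla^{b,a}u,d\nu\rangle$ is continuous across $\partial\Omega$. Combining the continuity of the single layer with the jump relation for the double layer and passing to the boundary in the interior identity produces exactly \eqref{7}, giving existence.

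For uniqueness I would apply the representation formula to the difference $w=u_1-u_2$ of two solutions, which satisfies $\Box_{a,b}w=0$ in $\Omega$ and the homogeneous version of \eqref{7}. Passing to the boundary in $c_{a,b}w(z)=-\int_{\partial\Omega}\varepsilon\langle\nabla^{b,a}w,d\nu\rangle+Ww(z)$ via the same jump relation gives one relation among the boundary trace $w|_{\partial\Omega}$ and the layer integrals; subtracting the homogeneous \eqref{7} eliminates the layer terms and leaves $c_{a,b}w(z_0)=0$ on $\partial\Omega$. Since $c_{a,b}\neq0$ this forces $w=0$ on $\partial\Omega$, and then the weak maximum principle for the second-order hypoelliptic operator $\Box_{a,b}$ with $a,b\geq0$ (Bony \cite{bony_69}) yields $w\equiv0$ in $\Omega$, so $u_1=u_2$. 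I expect the jump analysis of $W$ at characteristic points to be the genuinely hard part; everything else is bookkeeping around the transpose structure and an appeal to the maximum principle.
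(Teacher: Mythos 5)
Your proposal follows essentially the same route as the paper: the Green-type representation formula for $\Box_{a,b}$ with the $\nabla^{a,b}$/$\nabla^{b,a}$ flux terms, cancellation of the two layer potentials for the Newton potential at interior points, the jump relation $W^+u-W^0u=(c_{a,b}-H.R.)u$ to pass to the boundary, and uniqueness by showing the homogeneous boundary condition forces $w|_{\partial\Omega}=0$ and then invoking uniqueness for the Dirichlet problem. The only difference is one of sourcing: you sketch derivations of the Green identity and the jump relations and appeal to Bony's maximum principle, whereas the paper imports these ingredients directly from Romero's thesis (Theorems 4.5, 2.4 and 4.3 there).
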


The half residue $H.R. (z)$ in \eqref{EQ:HR} appears in the jump relations for the problem
\eqref{EQ:BV1} in the following way.
The double layer potential $Wu$ in \eqref{EQ:dp} has two limits
$$
W^+u(z)=\lim_{\stackrel{z_0\to z}{z_0\in \Omega}}\int_{\partial \Omega} u(\xi)\langle \nabla^{a,b}\varepsilon(\xi,z_0),d\nu(\xi)\rangle
$$
 and
$$
W^-u(z)=\lim_{\stackrel{z_0\to z}{z_0\not\in \Omega}}\int_{\partial \Omega} u(\xi)\langle \nabla^{a,b}\varepsilon(\xi,z_0),d\nu(\xi)\rangle,
$$
and the principal value
$$
W^0 u(z)= p.v. \;W u(z)=
\lim_{\delta\to 0}\int_{\partial \Omega \backslash\{|\xi^{-1} z|<\delta\}}
u(\xi)\langle \nabla^{a,b}\varepsilon(\xi,z),d\nu(\xi)\rangle.
$$
We note that this principal value enters as the second term in the integral boundary condition \eqref{7}.
It was proved in \cite[Theorem 2.4]{R} that for sufficiently regular $u$
(e.g. $u\in\Gamma_\alpha(\Omega)$) and $z\in \partial\Omega$
these limits exist and satisfy the jump relations
\begin{eqnarray}\nonumber
W^+ u(z)-W^-u(z)& = & c_{a,b} u(z), \\ \nonumber
W^0 u(z)-W^-u(z)& = & H.R.(z) u(z), \\
W^+u(z)-W^0 u(z)& = & (c_{a,b}-H.R.(z)) u(z), \label{EQ:jr1}
\end{eqnarray}
the last property \eqref{EQ:jr1} following from the first two by subtraction.

\begin{proof}[Proof of Theorem \ref{THM:main}]
Since the solid potential
\begin{equation}\label{EQ:solidp}
u(z)=\int_{\Omega}f(\xi)\varepsilon(\xi,z)d\nu(\xi)
\end{equation}
is a solution of \eqref{EQ:BV1}, from the aforementioned results of Folland and Stein it follows that
$u$ is locally in $\Gamma_{\alpha+2}(\Omega,loc)$ and that it is twice complex differentiable in
$\Omega$. In particular, it follows that $u\in C^{2}(\Omega)\cap C^1(\overline{\Omega})$.

The following representation formula can be derived from the generalised second Green's formula
(see Theorem 4.5 in \cite{R} and cf. \cite{M}),
for $u\in C^{2}(\Omega)\cap C^1(\overline{\Omega})$ we have
\begin{multline}\label{8}
c_{a,b}u(z)=c_{a,b}\int_{\Omega}f(\xi)\varepsilon(\xi,z)d\nu(\xi)
+\int_{\partial \Omega} u(\xi)\langle \nabla^{a,b}\varepsilon(\xi,z),d\nu(\xi)\rangle-
\\
\int_{\partial \Omega} \varepsilon(\xi,z) \langle\nabla^{b,a}u(\xi),d\nu(\xi)\rangle,
\quad \textrm{ for any } z\in\Omega.
\end{multline}
Since $u(z)$ given by \eqref{EQ:solidp}
is a solution of \eqref{EQ:BV1}, using it in  \eqref{8} we get
\begin{multline}\label{EQ:aux1}
\int_{\partial \Omega} u(\xi)\langle \nabla^{a,b}\varepsilon(\xi,z),d\nu(\xi)\rangle-\int_{\partial \Omega} \varepsilon(\xi,z) \langle\nabla^{b,a}u(\xi),d\nu(\xi)\rangle=0,\;\\
\textrm{ for any } z\in\Omega.
\end{multline}

It is easy to see that the fundamental solution, i.e. the
function $\varepsilon(z)$ in \eqref{3} is homogeneous of degree $-2n+2$, that is
$$
\varepsilon(\lambda z)=\lambda^{-2a-2b}\varepsilon(z)=\lambda^{-2n+2}\varepsilon(z)\quad
\textrm{ for any } \lambda>0,
$$
since $a+b=n-1$. It follows that $\varepsilon$ and its first order complex derivatives are locally integrable.
Since $\varepsilon(\xi,z)=\varepsilon(\xi^{-1}z)$, we obtain that as $z$ approaches the boundary,
we can pass to the limit in the second term in \eqref{EQ:aux1}.

By using this and the relation \eqref{EQ:jr1} as $z\in\Omega$ approaches the boundary
$\partial\Omega$ from inside, we find that
\begin{multline}\label{EQ:BC}
(c_{a,b}-H.R(z))u(z)+\lim_{\delta\to 0}\int_{\partial \Omega \backslash\{|\xi^{-1} z|<\delta\}} u(\xi)\langle \nabla^{a,b}\varepsilon(\xi,z),d\nu(\xi)\rangle-
\\
\int_{\partial \Omega} \varepsilon(\xi,z) \langle\nabla^{b,a}u(\xi),d\nu(\xi)\rangle=0,
\quad \textrm{ for any } z\in\partial\Omega.
\end{multline}

This shows that \eqref{6} is a solution of the boundary value problem \eqref{EQ:BV1} with the boundary condition
\eqref{7}.

\medskip
Now let us prove its uniqueness.
If the boundary value problem has two solutions $u$ and $u_{1}$
then the function $w=u-u_{1}\in C^{2}(\Omega)\cap C^1(\overline{\Omega})$ satisfies the homogeneous equation
\begin{equation}
\Box_{a,b}w=0\,\,\,\textrm{ in } \Omega, \label{9}
\end{equation}
and the boundary condition \eqref{7}, i.e.
\begin{equation}
(c_{a,b}-H.R(z))w(z)+\lim_{\delta\to 0}\int_{\partial \Omega \backslash\{|\xi^{-1} z|<\delta\}} w(\xi)\langle \nabla^{a,b}\varepsilon(\xi,z),d\nu(\xi)\rangle-\label{10}\end{equation}
$$
\int_{\partial \Omega} \varepsilon(\xi,z) \langle\nabla^{b,a}w(\xi),d\nu(\xi)\rangle=0,
$$
for any $z\in\partial\Omega.$

Since $f\equiv0$ in this case instead of \eqref{8} we have the following representation formula
\begin{equation}c_{a,b}w(z)=\int_{\partial \Omega} w(\xi)\langle \nabla^{a,b}\varepsilon(\xi,z),d\nu(\xi)\rangle-
\int_{\partial \Omega} \varepsilon(\xi,z) \langle\nabla^{b,a}w(\xi),d\nu(\xi)\rangle\label{11}\end{equation}
for any $z\in\Omega$.
As above, by using the properties of the double and single layer potentials as $z\rightarrow \partial\Omega$, we obtain
\begin{equation}
c_{a,b}w(z)=(c_{a,b}-H.R(z))w(z)+
\label{12}\end{equation}
$$\lim_{\delta\to 0}\int_{\partial \Omega \backslash\{|\xi^{-1} z|<\delta\}} w(\xi)\langle \nabla^{a,b}\varepsilon(\xi,z),d\nu(\xi)\rangle-
\int_{\partial \Omega} \varepsilon(\xi,z) \langle\nabla^{b,a}w,d\nu(\xi)\rangle
$$
for any $z\in\partial\Omega.$
Comparing this with \eqref{10} we arrive at
\begin{equation}
w(z)=0,  \,\, z\in\partial\Omega.\label{13}
\end{equation}

The homogeneous equation \eqref{9} with the Dirichlet boundary condition \eqref{13} has only trivial solution
$w\equiv 0$ in $\Omega$, see e.g. \cite[Theorem 4.3]{R}.
This shows that the boundary value problem \eqref{EQ:BV1} with the boundary condition
\eqref{7} has a unique solution in $C^{2}(\Omega)\cap C^1(\overline{\Omega})$.
This completes the proof of Theorem \ref{THM:main}.
\end{proof}

\section{Powers of the Kohn Laplacian}
\label{SEC:higher}

As before, let $\Omega \subset \mathbb{H}_{n-1}$ be
an open bounded domain with a smooth boundary $\partial\Omega \in C^{\infty}.$
For $m\in\mathbb N$, we denote $\Box_{a,b}^{m}:=\Box_{a,b}\Box_{a,b}^{m-1}$.
Then for $m=1,2,\ldots$, we consider
the equation
\begin{equation}
\Box_{a,b}^{m}u(z)=c_{a,b}f(z), \,\,z\in\Omega.
\label{17}
\end{equation}

Let $\varepsilon(\xi,z)=\varepsilon(\xi^{-1}z)$ be the
rescaled fundamental solution of the Kohn Laplacian as in \eqref{EQ:def-eps}.
Let us now define
\begin{equation}
u(z)=\int_{\Omega}f(\xi)\varepsilon_{m}(\xi,z)d\nu(\xi)
\label{18}
\end{equation}
in $\Omega\subset\mathbb{H}_{n-1}$, where
$\varepsilon_{m}(\xi,z)$ is a rescaled fundamental solution of \eqref{17} such that
$$
\Box_{a,b}^{m-1}\varepsilon_{m}=\varepsilon.
$$

We take, with a proper distributional interpretation, for $m=2,3,\ldots$,
\begin{equation}\varepsilon_{m}(\xi,z)=\int_{\Omega}\varepsilon_{m-1}(\xi,\zeta)\varepsilon(\zeta,z)d\nu(\zeta),\qquad
\xi,z\in \Omega, \label{19}\end{equation}
with
$$\varepsilon_{1}(\xi,z)=\varepsilon(\xi,z).$$

A simple calculation shows that the generalised Newton potential \eqref{18} is a solution of \eqref{17}
in $\Omega$. The aim of this section is to find a boundary condition on $\partial\Omega$ such that with this boundary
condition the equation \eqref{17} has a unique solution in $C^{2m}(\Omega)$, which coincides with \eqref{18}.

Although fundamental solutions for higher order hypoelliptic operators on the Heisenberg
group may not have unique fundamental solutions, see Geller \cite{geller_83}, in the case of
the iterated sub-Laplacian $\Box_{a,b}^{m}$ we still have the uniqueness for our
problem in the sense of the following theorem, and the uniqueness argument in its proof.

\begin{thm} \label{THM:main2}
For any $f\in \Gamma_{\alpha}(\Omega)$, the generalised Newton potential \eqref{18} is a unique solution of the equation \eqref{17} in $C^{2m}(\Omega)\cap C^{2m-1}(\overline{\Omega})$
with $m$ boundary conditions
\begin{multline}
(c_{a,b}-H.R(z))\Box_{a,b}^{i}u(z)+ \\
\sum_{j=0}^{m-i-1}\lim_{\delta\to 0}\int_{\partial \Omega \backslash\{|\xi^{-1} z|<\delta\}}
\Box_{a,b}^{j+i}u(\xi)\langle\nabla^{a,b}\Box_{a,b}^{m-1-j}\varepsilon_{m}(\xi,z),d\nu(\xi)\rangle
-\\
\sum_{j=0}^{m-i-1}\int_{\partial\Omega}\Box_{a,b}^{m-1-j}
\varepsilon_{m}(\xi,z)\langle\nabla^{b,a}\Box_{a,b}^{j+i}u(\xi)d\nu(\xi)\rangle=0,\quad
z\in \partial\Omega,
\label{20}
\end{multline}
for all $i=0,1,\ldots,m-1,$
where
$$\nabla^{a,b}g=\sum_{j=1}^{n-1} (aX_{j}gX_{\overline{j}}+b X_{\overline{j}}g X_{j})$$
and $H.R(z)$ is the half residue given by the formula \eqref{EQ:HR}.
\end{thm}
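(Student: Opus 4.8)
The plan is to mirror the structure of the proof of Theorem \ref{THM:main}, upgrading each step to handle the iterated operator $\Box_{a,b}^m$ and the associated tower of fundamental solutions $\varepsilon_1,\ldots,\varepsilon_m$. First I would record the regularity of the generalised Newton potential \eqref{18}: since $f\in\Gamma_\alpha(\Omega)$ and $\Box_{a,b}^{m-1}\varepsilon_m=\varepsilon$, applying the Folland--Stein hypoelliptic gain $\Gamma_\alpha\to\Gamma_{\alpha+2}$ iteratively $m$ times shows that $u$ lies in $\Gamma_{\alpha+2m}(\Omega,loc)$, hence $u\in C^{2m}(\Omega)\cap C^{2m-1}(\overline{\Omega})$, which is the regularity class claimed.

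The existence half reduces to establishing the $m$ boundary identities \eqref{20}. The key idea is to iterate the single-layer representation formula \eqref{8} from Theorem \ref{THM:main}. Writing $v_i:=\Box_{a,b}^i u$ for $i=0,1,\ldots,m-1$, I would observe that each $v_i$ satisfies $\Box_{a,b}^{m-i}v_i=c_{a,b}f$, so the chain $v_0,v_1,\ldots,v_{m-1}$ is governed by successively lower powers. Applying the generalised second Green formula repeatedly --- each application peeling off one factor of $\Box_{a,b}$ and one factor of $\varepsilon$ in the sense that the relevant kernel is $\Box_{a,b}^{m-1-j}\varepsilon_m$ --- produces for each fixed $i$ a representation formula valid for $z\in\Omega$ in which the solid term is the Newton potential of $f$ against $\varepsilon_{m-i}$ and the boundary terms are precisely the sums over $j=0,\ldots,m-i-1$ appearing in \eqref{20}. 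Because $u$ is the solid potential \eqref{18}, the solid term in each such formula cancels, leaving only the boundary sums equal to zero for $z\in\Omega$; one then lets $z\to\partial\Omega$ from inside and invokes the jump relation \eqref{EQ:jr1}, which converts the full double-layer limit $W^+$ into $(c_{a,b}-H.R(z))v_i(z)$ plus the principal-value integral. This yields exactly the $i$-th boundary condition \eqref{20}.

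For uniqueness I would take the difference $w=u-u_1$ of two solutions, so that $\Box_{a,b}^m w=0$ in $\Omega$ and $w$ satisfies all $m$ homogeneous boundary conditions. The strategy is a descending induction that extracts Dirichlet data for each iterate $\Box_{a,b}^i w$. Starting from $i=0$: comparing the homogeneous representation formula for $w$ (with $f\equiv0$) against the $i=0$ boundary condition, exactly as in the passage from \eqref{11}--\eqref{12} to \eqref{13}, forces $w=0$ on $\partial\Omega$. Then, reading off the $i=1,2,\ldots$ boundary conditions and using the already-established vanishing of $\Box_{a,b}^{j}w$ on $\partial\Omega$ for smaller indices to annihilate the boundary sums, one deduces in turn $\Box_{a,b}^i w=0$ on $\partial\Omega$ for every $i=0,\ldots,m-1$. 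At this point $w$ solves $\Box_{a,b}^m w=0$ with the Dirichlet-type data $w=\Box_{a,b}w=\cdots=\Box_{a,b}^{m-1}w=0$ on $\partial\Omega$; applying the uniqueness result for the homogeneous Dirichlet problem (\cite[Theorem 4.3]{R}) successively --- first to conclude $\Box_{a,b}^{m-1}w\equiv0$ in $\Omega$, then feeding this down the chain --- gives $w\equiv0$ in $\Omega$.

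The main obstacle I anticipate is the bookkeeping in the iterated Green formula: one must verify that successive integrations by parts genuinely reproduce the exact index pattern $\Box_{a,b}^{j+i}u$ paired with $\nabla^{a,b}\Box_{a,b}^{m-1-j}\varepsilon_m$, and in particular that the distributional interpretation of the convolution-defined kernels $\varepsilon_m$ in \eqref{19} is compatible with differentiating under the boundary integrals and with the local integrability of $\varepsilon$ and its first-order complex derivatives established via the homogeneity computation in Theorem \ref{THM:main}. The jump relations \eqref{EQ:jr1} were proved for the single kernel $\varepsilon$, so a secondary technical point is to confirm that the same half-residue relations hold when the kernel is replaced by the lower-order derivatives $\nabla^{a,b}\Box_{a,b}^{m-1-j}\varepsilon_m$, whose leading singularity matches that of $\nabla^{a,b}\varepsilon$; the smoother correction terms contribute no jump and pass to the boundary directly.
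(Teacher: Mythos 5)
Your existence half follows the paper's proof essentially verbatim: for each $i$ you iterate the generalised second Green formula against the kernel $\Box_{a,b}^{i}\varepsilon_{m}$ (a rescaled fundamental solution of $\Box_{a,b}^{m-i}$, which is what you call ``the kernel $\varepsilon_{m-i}$''), cancel the solid term against $c_{a,b}\Box_{a,b}^{i}u(z)$, and pass to the boundary using the jump relation \eqref{EQ:jr1} for the single genuinely singular term $j=0$ (kernel $\nabla^{a,b}\Box_{a,b}^{m-1}\varepsilon_{m}=\nabla^{a,b}\varepsilon$), the terms $j\geq 1$ having locally integrable kernels and hence no jump. That is exactly the paper's argument, and your closing technical remarks identify the right points to verify.

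The uniqueness half, however, contains a genuine gap in the intermediate step. You obtain $w=0$ on $\partial\Omega$ correctly from the $i=0$ condition, but you then propose to deduce $\Box_{a,b}^{i}w=0$ on $\partial\Omega$ for $i\geq 1$ by ``using the already-established vanishing of $\Box_{a,b}^{j}w$ on $\partial\Omega$ for smaller indices to annihilate the boundary sums'' in the $i$-th boundary condition. This does not work: the boundary sums in condition \eqref{20} for a given $i$ involve the traces of $\Box_{a,b}^{j+i}w$ for $j=0,\ldots,m-i-1$, i.e.\ of $\Box_{a,b}^{k}w$ with $k\geq i$, together with their first-order complex derivatives $\nabla^{b,a}\Box_{a,b}^{j+i}w$; none of these is controlled by the vanishing on $\partial\Omega$ of $\Box_{a,b}^{k}w$ for $k<i$ (knowing $w=0$ on $\partial\Omega$ says nothing about $\Box_{a,b}w$ or $\nabla^{b,a}w$ there). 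The paper avoids any induction at this stage: for each $i$ \emph{separately} it applies the Green formula to $v$ with the kernel $\Box_{a,b}^{i}\varepsilon_{m}$, obtaining $c_{a,b}\Box_{a,b}^{i}v(z)=[\text{the boundary sums of condition } i \text{ without the jump term}]$ for $z\in\Omega$, and then lets $z\to\partial\Omega$ so that the right-hand side becomes exactly $I_{i}(v)(z)=0$; this yields $\Box_{a,b}^{i}v=0$ on $\partial\Omega$ for every $i$ independently. (Alternatively, a genuinely \emph{descending} induction does work: the $i=m-1$ condition is precisely condition \eqref{7} for $\tilde v=\Box_{a,b}^{m-1}v$, so Theorem \ref{THM:main} gives $\Box_{a,b}^{m-1}v\equiv 0$ in $\Omega$, which then kills the $j=m-i-1$ terms in the lower conditions, and so on; but that is not the ascending scheme you describe.) Your final step --- reducing the Dirichlet problem for $\Box_{a,b}^{m}$ with data $\Box_{a,b}^{i}v|_{\partial\Omega}=0$ to $m$ applications of the second-order uniqueness --- agrees with the paper.
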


\begin{proof}
By applying Green's second formula for each $z\in \Omega$,
as in \eqref{8}, we obtain
\begin{equation}c_{a,b}u(z)=c_{a,b}\int_{\Omega}f(\xi)\varepsilon_{m}(\xi,z)d\nu(\xi)=
\int_{\Omega}\Box_{a,b}^{m}u(\xi)\varepsilon_{m}(\xi,z)d\nu(\xi)=\end{equation}

$$\int_{\Omega}\Box_{a,b}^{m-1}u(\xi)\Box_{a,b}\varepsilon_{m}(\xi,z)d\nu(\xi)-
\int_{\partial\Omega}\Box_{a,b}^{m-1}u(\xi)\langle \nabla^{a,b}\varepsilon_{m}(\xi,z),d\nu(\xi)\rangle+$$

$$\int_{\partial\Omega}\varepsilon_{m}(\xi,z)\langle \nabla^{b,a}\Box_{a,b}^{m-1}u(\xi),d\nu(\xi)\rangle=
\int_{\Omega}\Box_{a,b}^{m-2}u(\xi)\Box_{a,b}^{2}\varepsilon_{m}(\xi,z)d\nu(\xi)-$$

$$\int_{\partial\Omega}\Box_{a,b}^{m-2}u(\xi)\langle\nabla^{a,b}\Box_{a,b}\varepsilon_{m}(\xi,z),d\nu(\xi)\rangle
+$$
$$\int_{\partial\Omega}\Box_{a,b}\varepsilon_{m}(\xi,z)\langle\nabla^{b,a}\Box_{a,b}^{m-2}u(\xi),d\nu(\xi)\rangle-$$

$$\int_{\partial\Omega}\Box_{a,b}^{m-1}u(\xi)\langle \nabla^{a,b}\varepsilon_{m}(\xi,z),d\nu(\xi)\rangle+$$

$$\int_{\partial\Omega}\varepsilon_{m}(\xi,z)\langle \nabla^{b,a}\Box_{a,b}^{m-1}u(\xi),d\nu(\xi)\rangle=...=$$

$$c_{a,b}u(z)-\sum_{j=0}^{m-1}\int_{\partial\Omega}\Box_{a,b}^{j}u(\xi)\langle\nabla^{a,b}
\Box_{a,b}^{m-1-j}\varepsilon_{m}(\xi,z),d\nu(\xi)\rangle+$$
$$
\sum_{j=0}^{m-1}\int_{\partial\Omega}\Box_{a,b}^{m-1-j}\varepsilon_{m}(\xi,z)\langle\nabla^{b,a}\Box_{a,b}^{j}u(\xi),d\nu(\xi)\rangle,\quad z\in \Omega.
$$
This implies the identity
\begin{multline}\label{22}
\sum_{j=0}^{m-1}\int_{\partial\Omega}\Box_{a,b}^{j}u(\xi)\langle\nabla^{a,b}
\Box_{a,b}^{m-1-j}\varepsilon_{m}(\xi,z),d\nu(\xi)\rangle- \\
\sum_{j=0}^{m-1}\int_{\partial\Omega}\Box_{a,b}^{m-1-j}\varepsilon_{m}(\xi,z)\langle\nabla^{b,a}\Box_{a,b}^{j}u(\xi),d\nu(\xi)\rangle=0,\quad z\in \Omega.
\end{multline}

By using the properties of the double and single layer potentials as $z$
approaches the boundary $\partial\Omega$ from the interior, from \eqref{22} we obtain
\begin{multline*}
(c_{a,b}-H.R(z))u(z)+\sum_{j=0}^{m-1}\lim_{\delta\to 0}\int_{\partial \Omega \backslash\{|\xi^{-1} z|<\delta\}}\Box_{a,b}^{j}u(\xi)\langle\nabla^{a,b}
\Box_{a,b}^{m-1-j}\varepsilon_{m}(\xi,z),d\nu(\xi)\rangle -\\
\sum_{j=0}^{m-1}\int_{\partial\Omega}\Box_{a,b}^{m-1-j}\varepsilon_{m}(\xi,z)\langle\nabla^{b,a}\Box_{a,b}^{j}u(\xi),d\nu(\xi)\rangle=0,\quad z\in \partial\Omega.
\end{multline*}
Thus, this relation is one of the boundary conditions of \eqref{18}.
Let us derive the remaining boundary conditions. To this end, we write
\begin{equation}
\Box_{a,b}^{m-i}\Box_{a,b}^{i}u=c_{a,b}f,\quad i=0,1,\ldots,m-1,\quad m=1,2,\ldots,
\label{24}
\end{equation}
and carry out similar considerations just as above. This yields

$$c_{a,b}\Box_{a,b}^{i}u(z)=c_{a,b}\int_{\Omega}f(\xi)\Box_{a,b}^{i}\varepsilon_{m}(\xi,z)d\nu(\xi)=$$
$$\int_{\Omega}\Box_{a,b}^{m-i}\Box_{a,b}^{i}u(\xi)\Box_{a,b}^{i}\varepsilon_{m}(\xi,z)d\nu(\xi)=$$
$$\int_{\Omega}\Box_{a,b}^{m-i-1}\Box_{a,b}^{i}u(\xi)\Box_{a,b}\Box_{a,b}^{i}\varepsilon_{m}(\xi,z)d\nu(\xi)-$$
$$
\int_{\partial\Omega}\Box_{a,b}^{m-i-1}\Box_{a,b}^{i}u(\xi)\langle\nabla^{a,b}\Box_{a,b}^{i}\varepsilon_{m}(\xi,z),d\nu(\xi)\rangle+$$
$$\int_{\partial\Omega}\Box_{a,b}^{i}\varepsilon_{m}(\xi,z)\langle \nabla^{b,a}\Box_{a,b}^{m-i-1}\Box_{a,b}^{i}u(\xi),d\nu(\xi)\rangle=
$$
$$
\int_{\Omega}\Box_{a,b}^{m-i-2}\Box_{a,b}^{i}u(\xi)\Box_{a,b}^{2}\Box_{a,b}^{i}\varepsilon_{m}(\xi,z)d\nu(\xi)-$$
$$\int_{\partial\Omega}\Box_{a,b}^{m-i-2}\Box_{a,b}^{i}u(\xi)\langle\nabla^{a,b}\Box_{a,b}\Box_{a,b}^{i}\varepsilon_{m}(\xi,z),d\nu(\xi)\rangle+$$
$$\int_{\partial\Omega}\Box_{a,b}\Box_{a,b}^{i}\varepsilon_{m}(\xi,z)\langle\nabla^{b,a}\Box_{a,b}^{m-i-2}\Box_{a,b}^{i}u(\xi),d\nu(\xi)\rangle-$$
$$\int_{\partial\Omega}\Box_{a,b}^{m-i-1}\Box_{a,b}^{i}u(\xi)\langle\nabla^{a,b}\Box_{a,b}^{i}\varepsilon_{m}(\xi,z),d\nu(\xi)\rangle+$$
$$\int_{\partial\Omega}\Box_{a,b}^{i}\varepsilon_{m}(\xi,z)\langle\nabla^{b,a}\Box_{a,b}^{m-i-1}\Box_{a,b}^{i}u(\xi),d\nu(\xi)\rangle=$$

$$...=\int_{\Omega}\Box_{a,b}^{i}u(\xi)\Box_{a,b}^{m-i}\Box_{a,b}^{i}\varepsilon_{m}(\xi,z)d\nu(\xi)-$$
$$\sum_{j=0}^{m-i-1}\int_{\partial\Omega}
\Box_{a,b}^{j}\Box_{a,b}^{i}u(\xi)\langle\nabla^{a,b}\Box_{a,b}^{m-i-1-j}\Box_{a,b}^{i}\varepsilon_{m}(\xi,z),d\nu(\xi)\rangle+$$
$$\sum_{j=0}^{m-i-1}\int_{\partial\Omega}\Box_{a,b}^{m-i-1-j}\Box_{a,b}^{i}\varepsilon_{m}(\xi,z)
\langle\nabla^{b,a}\Box_{a,b}^{j}\Box_{a,b}^{i}u(\xi),d\nu(\xi)\rangle=$$
$$c_{a,b}\Box_{a,b}^{i}u(z)-\sum_{j=0}^{m-i-1}\int_{\partial\Omega}
\Box_{a,b}^{j+i}u(\xi)\langle\nabla^{a,b}\Box_{a,b}^{m-1-j}\varepsilon_{m}(\xi,z),d\nu(\xi)\rangle+$$
$$\sum_{j=0}^{m-i-1}\int_{\partial\Omega}\Box_{a,b}^{m-1-j}\varepsilon_{m}(\xi,z)
\langle\nabla^{b,a}\Box_{a,b}^{j+i}u(\xi),d\nu(\xi)\rangle,\quad z\in\Omega,
$$
where, as usual, $\varepsilon_{m}(\xi,z)=\varepsilon_{m}(\xi^{-1}z)$, and
$\Box_{a,b}^{i}\varepsilon_{m}$ is a
rescaled fundamental solution of the equation \eqref{24}, i.e.,
$$
\Box_{a,b}^{m-i}\Box_{a,b}^{i}\varepsilon_{m}=c_{a,b} \delta,\qquad i=0,1,\ldots,m-1.
$$

From the previous relations, we obtain the identities
\begin{multline*}
\sum_{j=0}^{m-i-1}\int_{\partial\Omega}
\Box_{a,b}^{j+i}u(\xi)\langle\nabla^{a,b}\Box_{a,b}^{m-1-j}\varepsilon_{m}(\xi,z),d\nu(\xi)\rangle
-\\
\sum_{j=0}^{m-i-1}\int_{\partial\Omega}\Box_{a,b}^{m-1-j}\varepsilon_{m}(\xi,z)
\langle\nabla^{b,a}\Box_{a,b}^{j+i}u(\xi),d\nu(\xi)\rangle=0
\end{multline*}
for any $z\in\Omega$ and $i=0,1,\ldots,m-1.$
By using the properties of the double and single layer potentials as
$z$ approaches the boundary $\partial\Omega$ from the interior of $\Omega$, we find that
\begin{multline*}\label{25}
(c_{a,b}-H.R(z))\Box_{a,b}^{i}u(z)+\sum_{j=0}^{m-i-1}\lim_{\delta\to 0}\int_{\partial \Omega \backslash\{|\xi^{-1} z|<\delta\}}
\Box_{a,b}^{j+i}u(\xi)\langle\nabla^{a,b}\Box_{a,b}^{m-1-j}\varepsilon_{m}(\xi,z),d\nu(\xi)\rangle
-\\
\sum_{j=0}^{m-i-1}\int_{\partial\Omega}\Box_{a,b}^{m-1-j}\varepsilon_{m}(\xi,z)
\langle\nabla^{b,a}\Box_{a,b}^{j+i}u(\xi),d\nu(\xi)\rangle=0,\quad
z\in\partial\Omega,
\end{multline*}
are all boundary conditions of \eqref{18} for each $i=0,1,\ldots,m-1$.

\medskip
Conversely, let us show that if a function $w\in C^{2m}(\Omega)\cap C^{2m-1}(\overline{\Omega})$ satisfies the equation $\Box_{a,b}^{m}w=f$  and
the boundary conditions \eqref{20}, then it coincides with the solution \eqref{18}.
Indeed, otherwise the function
$$v=u-w\in C^{2m}(\Omega)\cap C^{2m-1}(\overline{\Omega}),$$
where $u$ is the generalised Newton potential \eqref{18}, satisfies the homogeneous equation
\begin{equation}
\Box_{a,b}^{m}v=0\label{26}
\end{equation}
and the boundary conditions \eqref{20}, i.e.
\begin{multline*}
I_{i}(v)(z):=
(c_{a,b}-H.R(z))\Box_{a,b}^{i}v(z)  \\ +
\sum_{j=0}^{m-i-1}\lim_{\delta\to 0}\int_{\partial \Omega \backslash\{|\xi^{-1} z|<\delta\}}
\Box_{a,b}^{j+i}v(\xi)\langle\nabla^{a,b}\Box_{a,b}^{m-1-j}\varepsilon_{m}(\xi,z),d\nu(\xi)\rangle
\\ -
\sum_{j=0}^{m-i-1}\int_{\partial\Omega}\Box_{a,b}^{m-1-j}\varepsilon_{m}(\xi,z)
\langle\nabla^{b,a}\Box_{a,b}^{j+i}v(\xi),d\nu(\xi)\rangle=0,\quad i=0,1,\ldots,m-1,
\end{multline*}
for $z\in\partial\Omega.$
By applying the Green formula to the function $v\in C^{2m}(\Omega)\cap C^{2m-1}(\overline{\Omega})$ and by following the lines of the above
argument, we obtain
$$
0=\int_{\Omega}\Box_{a,b}^{m}v(z)\Box_{a,b}^{i}\varepsilon_{m}(\xi,z)d\nu(\xi)=
\int_{\Omega}\Box_{a,b}^{m-i}\Box_{a,b}^{i}v(z)\Box_{a,b}^{i}\varepsilon_{m}(\xi,z)d\nu(\xi)=$$
$$
\int_{\Omega}\Box_{a,b}^{m-1}v(z)\Box_{a,b}\Box_{a,b}^{i}\varepsilon_{m}(\xi,z)d\nu(\xi)-$$
$$\int_{\partial\Omega}
\Box_{a,b}^{m-1}v(z)\langle\nabla^{a,b}\Box_{a,b}^{i}\varepsilon_{m}(\xi,z),d\nu(\xi)\rangle+$$
$$\int_{\partial\Omega}\Box_{a,b}^{i}\varepsilon_{m}(\xi,z)
\langle\nabla^{a,b}\Box_{a,b}^{m-1}v(z),d\nu(\xi)\rangle=...=$$
$$c_{a,b}\Box_{a,b}^{i}v(z)-\sum_{j=0}^{m-i-1}\int_{\partial\Omega}
\Box_{a,b}^{j+i}v(\xi)\langle\nabla^{a,b}\Box_{a,b}^{m-1-j}\varepsilon_{m}(\xi,z),d\nu(\xi)\rangle+$$
$$\sum_{j=0}^{m-i-1}\int_{\partial\Omega}\Box_{a,b}^{m-1-j}\varepsilon_{m}(\xi,z)
\langle\nabla^{b,a}\Box_{a,b}^{j+i}v(\xi),d\nu(\xi)\rangle, \quad i=0,1,\ldots,m-1.$$

By passing to the limit as $z\rightarrow \partial\Omega$, we obtain the relations
\begin{equation}
\Box_{a,b}^{i}v(z)\mid_{z\in\partial\Omega}=I_{i}(v)(z)\mid_{z\in\partial\Omega}=0,\quad
i=0,1,\ldots,m-1.
\end{equation}

Assuming for the moment the uniqueness of the solution of the boundary value problem
\begin{equation}\label{EQ:DPm}
\Box_{a,b}^{m}v=0,
\end{equation}
$$
\Box_{a,b}^{i}v\mid_{\partial\Omega}=0, \quad i=0,1,\ldots,m-1,
$$
we get that $v=u-w\equiv0$, for all $z\in\Omega$, i.e. $w$ coincides with $u$ in $\Omega$.
Thus \eqref{18} is the unique solution of the boundary value problem
\eqref{17}, \eqref{20} in $\Omega$.

It remains to argue that the boundary value problem \eqref{EQ:DPm} has a unique solution
in $C^{2m}(\Omega)\cap C^{2m-1}(\overline{\Omega})$. Denoting
$\tilde v:= \Box_{a,b}^{m-1}v$,
this follows by induction
from the uniqueness in $C^{2}(\Omega)\cap C^{1}(\overline{\Omega})$ of the problem
$$
\Box_{a,b} \tilde v=0,\quad \tilde v\mid_{\partial\Omega}=0.
$$

The proof of Theorem \ref{THM:main2} is complete.
\end{proof}

\begin{rem}
It follows from Theorem \ref{THM:main2}  that the kernel \eqref{19}, which is a
rescaled fundamental solution of the equation \eqref{17}, is the Green function of the
boundary value problem \eqref{17}, \eqref{20} in $\Omega$. Therefore, the
boundary value problem
\eqref{17}, \eqref{20} can serve as an example of an explicitly solvable
boundary value problem in any
domain $\Omega$ (with smooth boundary) on the Heisenberg group.
\end{rem}

     \end{document}